\newtheorem{thm}{Theorem}[section]
\newtheorem{cor}[thm]{Corollary}
\newtheorem{lem}[thm]{Lemma}
\newtheorem{clm}[thm]{Claim}
\theoremstyle{definition}
\newtheorem{defin}[thm]{Definition}
\newtheorem{rem}[thm]{Remark}
\numberwithin{equation}{section}
\newcommand{\cD}{{\mathcal D}}
\newcommand{\cQ}{{\mathcal Q}}
\newcommand{\cR}{{\mathcal R}}
\newcommand{\cdq}{{\mathcal D\!\mathcal Q}}
\newcommand{\cdr}{{\mathcal D\!\mathcal R}}
\newcommand{\R}{{\mathbb R}}
\newcommand{\Z}{{\mathbb Z}}
\newcommand{\kK}{{\mathfrak K}}
\def\al{\alpha}
\def\gm{\gamma}
\def\dl{\delta}
\def\eps{\varepsilon}
\def\sg{\sigma}
\def\om{\omega}
\def\0{\emptyset}
\def\1{{\bf 1}}
\def\6{\partial}
\def\8{\infty}
\def\lt{\left}
\def\rt{\right}
\def\ol{\overline}
\def\lgl{\langle}
\def\rgl{\rangle}
\begin{document}

\title{The rectangular fractional integral operators}
\author[H.~Tanaka]{Hitoshi~Tanaka}
\address{
Research and Support Center on Higher Education for the Hearing and Visually Impaired, 
National University Corporation Tsukuba University of Technology, 
Kasuga 4-12-7, 
Tsukuba 05-8521, 
Japan
}
\email{htanaka@k.tsukuba-tech.ac.jp}

\thanks{
The author was supported by 
Grant-in-Aid for Scientific Research (C) (19K03510), 
the Japan Society for the Promotion of Science.
Part of this work was supported by the Research Institute for Mathematical Sciences,
an International Joint Usage/Research Center located in Kyoto University.
}

\keywords{
Carleson-type embedding;
Fefferman--Phong-type condition;
$M$-linear embedding theorem;
rectangular doubling weight;
rectangular fractional integral operator.
}
\subjclass[2010]{42B25,\,42B35.}
\date{}

\begin{abstract}
With rectangular doubling weight, a~generalized Hardy-Littlewood-Sobolev inequality for rectangular fractional integral operators is verified.
The result is a~nice application of $M$-linear embedding theorem for dyadic rectangles.
\end{abstract}

\maketitle

\section{Introduction}\label{sec1}
The purpose of this paper is to demonstrate a~nice application of $M$-linear embedding theorem for dyadic rectangles obtained in \cite{TY} and \cite{Ta4}, and to study a~generalized Hardy-Littlewood-Sobolev inequality for rectangular fractional integral operators.

For a~positive integer $N$, let $0<\al<N$.
For the rectangular doubling weight $\mu$\footnote{
For precise definition, see Subsection \ref{ssec3.1}
} on $\R^{N}$, define the rectangular the fractional integral operator $R_{\al}^{\mu}$ by
\[
R_{\al}^{\mu}f(x)
:=
\int_{\R^{N}}
\mu(R(x,y))^{\frac{\al}{N}-1}
f(y)\,{\rm d}\mu(y),
\quad x\in\R^{N},
\]
where $R(x,y)$ stands for the minimal rectangle, with respect to inculusion, which contains two deferent points, up to coordinates, $x$ and $y$ and has their sides parallel to the coordinate axes.
We have the following theorem the proof of which is our goal.

\begin{thm}\label{thm1.1}
For $1<p<q<\8$ with $\frac1q=\frac1p-\frac{\al}{N}$, the generalized Hardy-Littlewood-Sobolev inequality 
\[
\|R_{\al}^{\mu}f\|_{L^q(\mu)}
\lesssim
\|f\|_{L^p(\mu)}
\]
holds for all $f\in L^p(\mu)$.
\end{thm}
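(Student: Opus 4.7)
The plan is to reduce Theorem~\ref{thm1.1} to a bilinear dyadic embedding and then invoke the $M$-linear embedding theorem of \cite{TY,Ta4} in the case $M=2$.

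First, by $L^{q}$--$L^{q'}$ duality it suffices to show, for nonnegative $f\in L^{p}(\mu)$ and $g\in L^{q'}(\mu)$, the bilinear estimate
\[
\iint_{\R^{N}\times\R^{N}} \mu(R(x,y))^{\frac{\al}{N}-1} f(y)\,g(x)\,d\mu(y)\,d\mu(x)
\lesssim
\|f\|_{L^{p}(\mu)}\,\|g\|_{L^{q'}(\mu)}.
\]
Second, I would discretize the kernel. Using the identity
\[
\mu(R(x,y))^{\frac{\al}{N}-1}
=
\lt(1-\tfrac{\al}{N}\rt) \int_{\mu(R(x,y))}^{\8} t^{\frac{\al}{N}-2}\,dt
\]
and decomposing the integral dyadically, combined with finitely many shifted grids $\{\cD_{s}\}_{s}$ of dyadic rectangles for which the rectangular doubling of $\mu$ guarantees, at each dyadic $\mu$-scale above $\mu(R(x,y))$, the existence of a rectangle of comparable $\mu$-measure containing both $x$ and $y$, one obtains the pointwise domination
\[
\mu(R(x,y))^{\frac{\al}{N}-1}
\lesssim
\sum_{s}\sum_{\substack{R\in\cD_{s}\\ R\ni x,\,R\ni y}} \mu(R)^{\frac{\al}{N}-1}.
\]
Inserting this into the bilinear form and interchanging sum and integration yields
\[
\iint \mu(R(x,y))^{\frac{\al}{N}-1} f(y)\,g(x)\,d\mu(y)\,d\mu(x)
\lesssim
\sum_{s}\sum_{R\in\cD_{s}} \mu(R)^{\frac{\al}{N}+1} \lgl f\rgl_{R}^{\mu}\,\lgl g\rgl_{R}^{\mu},
\]
where $\lgl h\rgl_{R}^{\mu}:=\mu(R)^{-1}\int_{R} h\,d\mu$.

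Third, since the scaling hypothesis $\frac{1}{q}=\frac{1}{p}-\frac{\al}{N}$ is equivalent to $\frac{\al}{N}+1=\frac{1}{p}+\frac{1}{q'}$, the desired estimate reduces, uniformly in $s$, to the bilinear dyadic embedding
\[
\sum_{R\in\cD_{s}} \mu(R)^{\frac{1}{p}+\frac{1}{q'}} \lgl f\rgl_{R}^{\mu}\,\lgl g\rgl_{R}^{\mu}
\lesssim
\|f\|_{L^{p}(\mu)}\,\|g\|_{L^{q'}(\mu)}.
\]
This is precisely the $M=2$ instance of the $M$-linear embedding theorem for dyadic rectangles of \cite{TY,Ta4}, applied to the trivial coefficient sequence $a_{R}\equiv 1$. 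Its Fefferman--Phong-type testing hypothesis then reduces to a local testing condition over each dyadic rectangle, which can be established by a standard geometric-series argument at dyadic scales, exploiting the rectangular doubling of $\mu$.

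The step I expect to be hardest is the discretization. Unlike the case of cubes, there is no simple ``three-shifts'' trick that works uniformly for rectangles of arbitrary aspect ratio, so one must carefully use the rectangular doubling of $\mu$ to guarantee that finitely many shifted dyadic grids suffice to approximate every minimal containing rectangle $R(x,y)$ by a dyadic one of comparable $\mu$-measure that still contains both $x$ and $y$. Once this pointwise kernel domination is in place, the remaining verification of the Fefferman--Phong testing condition for $a_{R}\equiv 1$ in the $M$-linear embedding theorem is essentially mechanical and again reduces to doubling-type estimates.
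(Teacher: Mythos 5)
Your proposal is correct and follows essentially the same route as the paper: discretize the kernel by finitely many shifted dyadic grids (the paper does this via a P\'{e}rez-type representation with $3R$ and the $\{0,\pm\frac13\}$ shifts applied coordinatewise in each factor $\R^{N_i}$, so the ``three-shifts trick'' does in fact survive the passage to product cubes), reduce to the rectangular dyadic positive operator, and apply the $M=2$ case of the embedding theorem, whose Fefferman--Phong condition is satisfied identically because the exponent $\frac{\al}{N}-1+\frac1q+\frac{1}{p'}$ vanishes under the scaling relation. The only bookkeeping slip is that the geometric-series/doubling argument you describe is what verifies the condition $({\rm D})$ hypothesis of Theorem~\ref{thm2.1} (via reverse doubling, as in Remark~\ref{rem3.3}), not the Fefferman--Phong condition itself, which requires no work.
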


In the case $\mu \equiv 1$ and when we restrict rectangles to cubes, Theorem \ref{thm1.1} is just the Hardy-Littlewood-Sobolev inequality which is one of the most fundamental norm inequality of real variable harmonic analysis.
In the case $\mu$ is a~doubling weight and when we restrict rectangles to cubes, Theorem \ref{thm1.1} was studied in Stein's book \cite{St}.

Thanks to the fact that $\mu$ is rectangular doubling weight, the operator $R_{\al}^{\mu}$ almost commutes with a~multi-parameter family of dilations.
The weight theory of such product operators, commuting with a~multi-parameter family of dilations, 
is not developed so much up to now despite a~number of pioneering works in the 1980's due to Robert Fefferman and Elias Stein
(see \cite{Fe1,Fe2,FS}).
The area remains largely open for product fractional integrals
(see also \cite{CXY,KM,SZ,Wa}). 

The letter $C$ will be used for constants that may change from one occurrence to another.
Constants with subscripts, such as $C_1$, $C_2$, do not change in different occurrences.
By $A\approx B$ we mean that 
$c^{-1}B\le A\le cB$ 
with some positive finite constant $c$ independent of appropriate quantities. 
We write $X\lesssim Y$, $Y\gtrsim X$ 
if there is a independent constant $c$ such that $X \le cY$. 

\section{
$M$-linear embedding theorem for product dyadic cubes
}\label{sec2}
In what follows we recall $M$-linear embedding theorem for dyadic rectangles obtained in \cite{TY} and \cite{Ta4}. 

Let $(N_1,N_2,\ldots,N_n)$ be an~$n$-tuple of positive integers and let $N=\sum_{i=1}^nN_i$. We decompose the $N$ dimensional Euclidean space $\R^{N}$ by
\[
\R^{N}
=
\prod_{i=1}^n\R^{N_i}.
\]
By $x\in\R^{N}$ we will denote 
\[
x=(x_1,x_2,\ldots,x_n),
\quad x_i\in\R^{N_i}.
\]
By a~\emph{product cube} we will always mean a~rectangle $R\subset\R^{N}$ of the form
\[
R
=
\prod_{i=1}^nQ_i,
\quad Q_i\in\cQ(\R^{N_i}),
\]
where $\cQ(\R^d)$ stands for the set of all cubes in $\R^d$ with sides parallel to the coordinate axes. 
We will denote by $\cR(\R^{N})$ the family of all such product cubes. That is,
\[
\cR(\R^{N})
:=
\prod_{i=1}^n\cQ(\R^{N_i}).
\]
We denote by $\cdq(\R^d)$ the family of all dyadic cubes 
$Q=2^{-k}(m+[0,1)^d)$, 
$k\in\Z,\,m\in\Z^d$. 
We denote by $\cdr(\R^{N})$ 
the family of all product dyadic cubes on the product space $\R^{N}$, that is, 
\[
\cdr(\R^{N})
:=
\prod_{i=1}^n\cdq(\R^{N_i}).
\]

By a~\emph{weight} we will always mean a~nonnegative, locally integrable function  on the product space $\R^{N}$ which is positive on a~set of positive measure.
Given a~measurable set $E$ and a~weight $\om$, we will use the following notation:

\begin{itemize}
\item
Denote by $|E|$ the volume of $E$;
\item
The symbol $\om(E)$ denotes the quantity $\int_{E}\om(x)\,{\rm d}x$;
\item
The symbol $\1_{E}$ stands for the characteristic function of $E$;
\item
The measure of weight ${\rm d}\om$ is defined by ${\rm d}\om:=\om(x)\,{\rm d}x$.
\end{itemize}

\noindent
Let $1\le p<\8$ and $\om$ be a~weight. 
We define the weighted Lebesgue space $L^p(\om)$ to be a~Banach space equipped with the norm 
\[
\|f\|_{L^p(\om)}
:=
\lt(\int_{\R^{N}}|f|^p\,{\rm d}\om\rt)^{\frac1p}.
\]
Given $1<p<\8$, 
$p'=\frac{p}{p-1}$ denotes the conjugate exponent of $p$. 

\noindent\textbf{
The condition $({\rm D})$:
}\quad
We denote by $P_i$, $i=1,2,\ldots,n$, the projection onto the coordinate subspace $\R^{N_i}$. 
For a~product cube $R\in\cR(\R^{N})$,
an~integer $j=1,2,\ldots,n$ and 
a~cube $Q\in\cQ(\R^{N_j})$,
we define the product cube 
\[
\lgl R;\,Q,j\rgl
:=
\lt(\prod_{i=1}^{j-1}P_i(R)\rt)
\times Q\times
\lt(\prod_{i=j+1}^nP_i(R)\rt),
\]
which simply replaces $P_j(R)$ by $Q$.
For a~cube $Q\in\cQ(\R^d)$, let $\cD(Q)$ be the collection of all dyadic subcubes of $Q$, that is, all those cubes obtained by dividing $Q$ into $2^d$ congruent cubes of half its length, 
dividing each of those into $2^d$ congruent cubes, and so on. By convention, $Q$ itself belongs to $\cD(Q)$.

\begin{quotation}
For all positive number $\eps>0$, we assume that the weight $\sg$ satisfies \emph{the condition $({\rm D})$}: 
\begin{equation}\tag{{\rm D}}
\sum_{Q\in\cD(P_j(R))}
\sg(\lgl R;\,Q,j\rgl)^{1+\eps}
\le C_{\sg,\eps}
\sg(R)^{1+\eps}
\end{equation}
holds for all product cubes $R\in\cR(\R^{N})$ and all integers $j=1,2,\ldots,n$, where the constant $C_{\sg,\eps}$ depends only on the weight $\sg$ and the parameter $\eps$.
\end{quotation}

The following $M$-linear embedding theorem for product dyadic cubes is our crucial tool.
For the case of dyadic cubes, we refer to a~series of works 
\cite{HHL,Hy,LSU,NTV,Ta1,Ta2,Ta3,Tr}.

\begin{thm}
[{\rm\cite[Theorem 3.1]{Ta4}}]
\label{thm2.1}
Let $\kK:\,\cdr(\R^{N})\to[0,\8)$ be a~map. 
For $k=1,2,\ldots,M$, 
let $\sg_k$ be a~weight on $\R^{N}$ 
that satisfies the condition $({\rm D})$ and 
let $1<p_k<\8$ with 
$\sum_{k=1}^{M}\frac{1}{p_k}>1$.
The following statements are equivalent\text{:}

\begin{itemize}
\item[{\rm(a)}] 
The $M$-linear embedding inequality for product dyadic cubes 
\[
\sum_{R\in\cdr(\R^{N})}
\kK(R)\prod_{k=1}^{M}
\lt|\int_{R}f_k\,{\rm d}\sg_k\rt|
\le c_1
\prod_{k=1}^{M}
\|f_k\|_{L^{p_k}(\sg_k)}
\]
holds for all 
$f_k\in L^{p_k}(\sg_k)$, 
$k=1,2,\ldots,M$;
\item[{\rm(b)}] 
The Fefferman--Phong-type condition 
\[
\kK(R)
\prod_{k=1}^{M}
\sg_k(R)^{\frac{1}{p_k'}}
\le c_2
\]
holds for all product dyadic cubes $R\in\cdr(\R^{N})$.
\end{itemize}

\noindent
Moreover, 
the least possible constants $c_1$ and $c_2$ are equivalent.
\end{thm}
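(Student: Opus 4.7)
The plan is to prove the two implications separately; (a) $\Rightarrow$ (b) is routine while (b) $\Rightarrow$ (a) carries the weight of the argument. For (a) $\Rightarrow$ (b), I would fix a product dyadic rectangle $R_0\in\cdr(\R^N)$ and test (a) with $f_k = \1_{R_0}$ for every $k$. Since $\int_{R_0}\1_{R_0}\,{\rm d}\sg_k = \sg_k(R_0)$ and $\|\1_{R_0}\|_{L^{p_k}(\sg_k)} = \sg_k(R_0)^{1/p_k}$, retaining only the term $R = R_0$ on the left gives $\kK(R_0)\prod_k \sg_k(R_0) \le c_1\prod_k \sg_k(R_0)^{1/p_k}$, which is (b) with $c_2 \le c_1$.

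For the main direction (b) $\Rightarrow$ (a), I would first reduce to $f_k \ge 0$ with $\|f_k\|_{L^{p_k}(\sg_k)} = 1$ and truncate to a finite subfamily of $\cdr(\R^N)$ so that absolute convergence is available. The plan is then to run $M$ parallel corona (principal rectangle) decompositions, one for each weight $\sg_k$, in the spirit of the one-parameter embedding proofs \cite{HHL,Hy,LSU,NTV,Ta1,Ta2,Ta3,Tr}. For each $R\in\cdr(\R^N)$ and each $k$, let $\pi_k(R)$ be the smallest principal rectangle for $\sg_k$ containing $R$, and split the left-hand side of (a) as
\[
\sum_{(P_1,\ldots,P_M)}
\;\sum_{R:\,\pi_k(R) = P_k\ \forall k}
\kK(R)\prod_{k=1}^M\int_R f_k\,{\rm d}\sg_k.
\]
Within the inner sum the averages $\sg_k(R)^{-1}\int_R f_k\,{\rm d}\sg_k$ are comparable to their values at $P_k$; pulling these constants out and invoking (b) bounds the inner sum by $c_2\prod_k \sg_k(P_k)^{1/p_k'}$ times the stopping averages at $P_k$. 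Applying H\"older in the outer sum with exponents $p_1,\ldots,p_M$---legitimate because $\sum 1/p_k > 1$---together with a Carleson-type packing bound for each principal family $\cP_k$ produces the desired $\prod_k\|f_k\|_{L^{p_k}(\sg_k)}$.

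The main obstacle is the execution of the corona construction on a product space. The one-parameter argument rests on Carleson's embedding theorem, which in turn uses doubling and the $L^p$-boundedness of the dyadic maximal operator---neither of which is available here, since the strong maximal function is in general unbounded on $L^p(\sg_k)$. This is exactly where condition $({\rm D})$ is put to work: iterated over the coordinate directions $j = 1,\ldots,n$ it yields Carleson-type packing estimates of the form $\sum_{Q\in\cD(P_j(R))}\sg(\lgl R;Q,j\rgl)^{1+\eps}\lesssim \sg(R)^{1+\eps}$ that replace the missing geometric decay of $\sg_k$-measure across the principal family. The delicate part will be arranging the various small parameters $\eps$ produced by $({\rm D})$ across the $n$ coordinates and $M$ weights so that they cohere with the H\"older exponents at the final step.
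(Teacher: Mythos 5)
Your direction (a) $\Rightarrow$ (b) is correct and standard. For the main direction, note first that the paper does not prove Theorem~\ref{thm2.1}: it is imported from \cite[Theorem 3.1]{Ta4}, and the intended mechanism is made explicit in the Appendix, where the Carleson-type embedding lemma is stated together with the remark that the Fefferman--Phong-type condition ``simply links'' it to the $M$-linear embedding theorem. That argument is corona-free: one uses (b) to bound $\kK(R)\prod_k\int_Rf_k\,{\rm d}\sg_k\le c_2\prod_k\sg_k(R)^{1/p_k}\bigl(\sg_k(R)^{-1}\int_Rf_k\,{\rm d}\sg_k\bigr)$, chooses auxiliary exponents $q_k>p_k$ with $\sum_k 1/q_k=1$ (possible precisely because $\sum_k 1/p_k>1$), applies H\"older over $R\in\cdr(\R^{N})$ with the $q_k$, and then controls each factor $\bigl(\sum_R\sg_k(R)^{q_k/p_k}(\sg_k(R)^{-1}\int_Rf_k\,{\rm d}\sg_k)^{q_k}\bigr)^{1/q_k}$ by the Carleson-type embedding for the pair $(p_k,q_k)$, whose testing condition is exactly condition $({\rm D})$ with $1+\eps=q_k/p_k$.

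Your plan instead runs $M$ parallel corona decompositions, and this is a genuine gap rather than a presentational one. The one-parameter principal-cube construction rests on two facts: dyadic cubes containing a fixed cube form a chain (so that $\pi_k(R)$ is well defined), and the principal family satisfies a Carleson packing bound proved via the universal $L^{p}(\sg_k)$-boundedness of the dyadic maximal operator $M_{\sg_k}$. Both fail for dyadic rectangles: rectangles containing a given rectangle are not nested, and the strong maximal operator is not bounded on $L^{p}(\sg)$ for general $\sg$. You acknowledge this obstacle but then assert that condition $({\rm D})$ ``replaces the missing geometric decay''; it cannot, because $({\rm D})$ is a condition on the weight alone, whereas the corona packing bound concerns the interaction of the stopping family with $f_k$. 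No construction of a family of principal rectangles enjoying both the comparability-of-averages property and a packing estimate is given, and supplying one is precisely the difficulty that forces the actual proof onto a different route. Finally, your concluding H\"older step with exponents exactly $p_1,\ldots,p_M$ would require a Carleson embedding at the endpoint $q=p$, i.e.\ boundedness of the strong maximal operator, which is false; the strict inequality $\sum_k 1/p_k>1$ is needed to create the room $q_k>p_k$ for the embedding lemma, not merely to ``legitimize'' H\"older.
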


By a~simple duality argument for the case $M=2$ we have the following corollary.

\begin{cor}\label{cor2.2}
Let $\kK:\,\cdr(\R^{N})\to[0,\8)$ be a~map. 
Let $\om$ and $\sg$ be the weights on $\R^{N}$ that satisfy the condition $({\rm D})$ and let $1<p<q<\8$.
The following statements are equivalent\text{:}

\begin{itemize}
\item[{\rm(a)}] 
The weighted norm inequality for rectangular dyadic positive operator $T_{\kK}^{\sg}$
\[
\|T_{\kK}^{\sg}f\|_{L^q(\om)}
\le c_1
\|f\|_{L^p(\sg)}
\]
holds for all $f\in L^p(\sg)$.
Here, 
\[
T_{\kK}^{\sg}f(x)
:=
\sum_{R\in\cdr(\R^{N})}
\kK(R)\1_{R}(x)
\int_{R}f\,{\rm d}\sg,
\quad x\in\R^{N}.
\]
\item[{\rm(b)}] 
The Fefferman--Phong-type condition 
\[
\kK(R)
\om(R)^{\frac1q}
\sg(R)^{\frac{1}{p'}}
\le c_2
\]
holds for all product dyadic cubes $R\in\cdr(\R^{N})$.
\end{itemize}

\noindent
Moreover,
the least possible constants $c_1$ and $c_2$ are equivalent.
\end{cor}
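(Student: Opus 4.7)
The plan is to reduce Corollary \ref{cor2.2} to the bilinear ($M=2$) case of Theorem \ref{thm2.1} via standard duality, taking the two weights in the embedding theorem to be $\sg_1:=\sg$ and $\sg_2:=\om$, and the two exponents to be $p_1:=p$ and $p_2:=q'$.

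First I would rewrite statement (a) in bilinear form. Since $T_{\kK}^{\sg}f\ge 0$ whenever $f\ge 0$, by duality between $L^q(\om)$ and $L^{q'}(\om)$ the inequality $\|T_{\kK}^{\sg}f\|_{L^q(\om)}\le c_1\|f\|_{L^p(\sg)}$ is equivalent to
\[
\int_{\R^{N}} T_{\kK}^{\sg}f(x)\,g(x)\,{\rm d}\om(x)
\le c_1\,\|f\|_{L^p(\sg)}\,\|g\|_{L^{q'}(\om)}
\]
for all nonnegative $f\in L^p(\sg)$ and $g\in L^{q'}(\om)$. Interchanging sum and integral and using $\int_{R}g\,{\rm d}\om=\int_{\R^{N}}\1_{R}g\,{\rm d}\om$ turns the left-hand side into
\[
\sum_{R\in\cdr(\R^{N})}
\kK(R)
\lt(\int_{R}f\,{\rm d}\sg\rt)
\lt(\int_{R}g\,{\rm d}\om\rt),
\]
which is exactly the $M=2$ embedding sum of Theorem \ref{thm2.1} with the identifications above.

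Next I would check the admissibility hypothesis of Theorem \ref{thm2.1}. The weights $\sg_1=\sg$ and $\sg_2=\om$ both satisfy the condition $({\rm D})$ by assumption, and
\[
\frac{1}{p_1}+\frac{1}{p_2}=\frac{1}{p}+\frac{1}{q'}=1+\lt(\frac{1}{p}-\frac{1}{q}\rt)>1
\]
because $p<q$. Hence Theorem \ref{thm2.1} applies and tells us that the bilinear embedding is equivalent to
\[
\kK(R)\,\sg_1(R)^{\frac{1}{p_1'}}\,\sg_2(R)^{\frac{1}{p_2'}}
=\kK(R)\,\sg(R)^{\frac{1}{p'}}\,\om(R)^{\frac1q}
\le c_2
\]
for all $R\in\cdr(\R^{N})$, with equivalence of the sharp constants $c_1$ and $c_2$. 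This is precisely (b), so (a)$\iff$(b) with comparable constants, completing the proof.

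There is essentially no obstacle here beyond bookkeeping: the main point is the matching of exponents, $\frac{1}{p_2'}=1-\frac{1}{q'}=\frac{1}{q}$, and the verification that $\frac{1}{p}+\frac{1}{q'}>1$, both of which follow directly from $p<q$. The density/positivity issues in applying duality are standard: it suffices to test against nonnegative $f$ and $g$, and a routine truncation (restrict to finitely many $R$ and take $f$, $g$ bounded with compact support) ensures all quantities are finite before passing to the supremum.
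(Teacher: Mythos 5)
Your proposal is correct and matches the paper's intent exactly: the paper derives Corollary \ref{cor2.2} from Theorem \ref{thm2.1} by precisely this "simple duality argument for the case $M=2$," with $\sg_1=\sg$, $\sg_2=\om$, $p_1=p$, $p_2=q'$, and the check that $\frac1p+\frac1{q'}>1$ since $p<q$. Your bookkeeping of the exponent $\frac{1}{p_2'}=\frac1q$ is the whole content of the step, and you have it right.
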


We have the following corollary too.

\begin{cor}\label{cor2.3}
Let $0<\al<N$.
Let $\mu$ be a~weight on $\R^{N}$ that satisfies the condition $({\rm D})$ and let $1<p<q<\8$ with $\frac1q=\frac1p-\frac{\al}{N}$.
Then, the Hardy-Littlewood-Sobolev inequality for the rectangular dyadic positive operator $T_{\al}^{\mu}$
\[
\|T_{\al}^{\mu}f\|_{L^q(\mu)}
\lesssim
\|f\|_{L^p(\mu)}
\]
holds for all $f\in L^p(\mu)$\footnote{
We merely check that 
$\frac{\al}{N}-1+\frac1q+1-\frac1p
=
\frac{\al}{N}+\frac1q-\frac1p
=0$.
}.
Here, 
\[
T_{\al}^{\mu}f(x)
:=
\sum_{R\in\cdr(\R^{N})}
\mu(R)^{\frac{\al}{N}-1}\1_{R}(x)
\int_{R}f\,{\rm d}\mu,
\quad x\in\R^{N}.
\]
\end{cor}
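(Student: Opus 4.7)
The plan is to obtain this as a direct specialization of Corollary \ref{cor2.2}. I would take both weights equal to $\mu$, i.e.\ $\om=\sg=\mu$, and choose the kernel map
\[
\kK(R):=\mu(R)^{\frac{\al}{N}-1},\quad R\in\cdr(\R^{N}),
\]
so that the operator $T_{\kK}^{\sg}$ appearing in Corollary \ref{cor2.2} coincides pointwise with the operator $T_{\al}^{\mu}$ of the present corollary. Since $\mu$ is assumed to satisfy the condition $({\rm D})$, both weight-theoretic hypotheses of Corollary \ref{cor2.2} are automatically in force.

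The only remaining task is to check the Fefferman--Phong-type condition. With the above choices, it becomes
\[
\kK(R)\,\om(R)^{\frac1q}\,\sg(R)^{\frac{1}{p'}}
=
\mu(R)^{\frac{\al}{N}-1+\frac1q+\frac{1}{p'}}.
\]
Substituting $\frac{1}{p'}=1-\frac1p$ and invoking the scaling relation $\frac1q=\frac1p-\frac{\al}{N}$, the exponent collapses, exactly as already recorded in the footnote of the statement, to
\[
\frac{\al}{N}-1+\frac1q+1-\frac1p
=
\frac{\al}{N}+\frac1q-\frac1p
=0.
\]
Hence the left-hand side equals $\mu(R)^{0}=1$ uniformly in $R\in\cdr(\R^{N})$, so the Fefferman--Phong-type condition holds trivially with $c_2=1$. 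An appeal to the implication $({\rm b})\Rightarrow({\rm a})$ of Corollary \ref{cor2.2} then delivers the desired inequality $\|T_{\al}^{\mu}f\|_{L^q(\mu)}\lesssim\|f\|_{L^p(\mu)}$.

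I do not expect any genuine obstacle: all the analytic work has already been absorbed into Theorem \ref{thm2.1} and Corollary \ref{cor2.2}, and the present statement is the clean scale-invariant case for which that embedding theorem was tailored. The only points requiring care are purely bookkeeping: reading off the correct exponents $\frac1q$ and $\frac{1}{p'}$ for the two weights, and noting that the implicit constant depends on $\mu$ only through the constant $C_{\mu,\eps}$ from condition $({\rm D})$ that enters via Corollary \ref{cor2.2}.
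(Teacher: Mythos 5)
Your proposal is correct and coincides with the paper's intended argument: the footnote to Corollary \ref{cor2.3} records exactly the exponent computation $\frac{\al}{N}-1+\frac1q+1-\frac1p=0$ that verifies the Fefferman--Phong-type condition after specializing Corollary \ref{cor2.2} with $\om=\sg=\mu$ and $\kK(R)=\mu(R)^{\frac{\al}{N}-1}$. Nothing further is needed.
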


\section{
Proof of Theorem \ref{thm1.1}
}\label{sec3}
In what follows we will prove Theorem \ref{thm1.1}.

\subsection{
Rectangular doubling weight
}\label{ssec3.1}
We need some definitions and remarks. 

\begin{defin}\label{def3.1}
For the cube $Q\in\cQ(\R^d)$, let 
\[
\cD^{(1)}(Q)
:=
\{Q'\in\cD(Q):\,2\ell(Q')=\ell(Q)\},
\]
where, by $\ell(Q)$, we denote the side-lengths of cube $Q$.

\begin{itemize}
\item
We say that a~weight $\sg$ on the product space $\R^{N}$ is a~\emph{doubling weight} if 
there is a~constant $\dl>0$ such that 
\[
\sg(R)\le\dl\sg(\lgl R;\,Q,j\rgl)
\]
holds for all product cubes $R\in\cR(\R^{N})$, all integers $j=1,2,\ldots,n$ and all dyadic cubes $Q\in\cD^{(1)}(P_j(R))$.
\item
We say that a~weight $\sg$ on the product space $\R^{N}$ is a~\emph{reverse doubling weight} if there is a~constant $\gm>1$ such that 
\[
\gm\sg(\lgl R;\,Q,j\rgl)\le\sg(R)
\]
holds for all product cubes $R\in\cR(\R^{N})$, all integers $j=1,2,\ldots,n$ and all dyadic cubes $Q\in\cD^{(1)}(P_j(R))$.
\end{itemize}
\end{defin}

\begin{rem}\label{rem3.2}
If the weight $\sg$ is doubling, then it is reverse doubling.
Indeed, for any $R\in\cR(\R^{N})$, any $j=1,2,\ldots,n$ and any $Q\in\cD^{(1)}(P_j(R))$, 
\[
\sg(R)
=
\sum_{Q'\in\cD^{(1)}(P_j(R))}
\sg(\lgl R;\,Q',j\rgl)
\ge
\lt(1+\frac{2^{N_j}-1}{\dl}\rt)
\sg(\lgl R;\,Q,j\rgl)
\ge
\lt(1+\frac{1}{\dl}\rt)
\sg(\lgl R;\,Q,j\rgl),
\]
where we have used 
\[
\sg(\lgl R;\,Q',j\rgl)
\ge
\frac{\sg(R)}{\dl}
\ge
\frac{\sg(Q)}{\dl}.
\]

Conversely, if the weight $\sg$ is reverse doubling and $\gm>2^{\max_iN_i}-1$, then it is doubling. 
Indeed, for any $R\in\cR(\R^{N})$, any $j=1,2,\ldots,n$ and any $Q\in\cD^{(1)}(P_j(R))$, 
\begin{align*}
\sg(R)
&=
\sum_{Q'\in\cD^{(1)}(P_j(R))}
\sg(\lgl R;\,Q',j\rgl)
\le
\frac{2^{N_j}-1}{\gm}\sg(R)
+
\sg(\lgl R;\,Q,j\rgl)
\\ &\le
\frac{2^{\max_iN_i}-1}{\gm}\sg(R)
+
\sg(\lgl R;\,Q,j\rgl).
\end{align*}
Hence,
\[
\sg(R)
\le
\frac{\gm}{\gm+1-2^{\max_iN_i}}
\sg(\lgl R;\,Q,j\rgl).
\]
\end{rem}

\begin{rem}\label{rem3.3}
If $\sg$ is a~reverse doubling weight on the product space $\R^{N}$, then it satisfies condition $({\rm D})$.
Indeed, for any product cube $R\in\cdr(\R^{N})$, any integer $j=1,2,\ldots,n$ and any positive number $\eps>0$,
\begin{align*}
\sum_{Q\in\cD(P_j(R))}
\sg(\lgl R;\,Q,j\rgl)^{1+\eps}
&=
\sum_{k=0}^{\8}
\sum_{\substack{
Q\in\cD(P_j(R)) 
\\ 
\ell(Q)=2^{-k}\ell(P_j(R))
}}
\sg(\lgl R;\,Q,j\rgl)^{1+\eps}
\\ &=
\sum_{k=0}^{\8}
\sum_{\substack{
Q\in\cD(P_j(R)) 
\\ 
\ell(Q)=2^{-k}\ell(P_j(R))
}}
\sg(\lgl R;\,Q,j\rgl)^{\eps}
\sg(\lgl R;\,Q,j\rgl)
\\ &\le
\sum_{k=0}^{\8}
\lt(\frac{1}{\gm^k}\rt)^{\eps}
\sg(R)^{\eps}
\sum_{\substack{
Q\in\cD(P_j(R)) 
\\ 
\ell(Q)=2^{-k}\ell(P_j(R))
}}
\sg(\lgl R;\,Q,j\rgl)
\\ &=
\sg(R)^{1+\eps}
\sum_{k=0}^{\8}
\lt(\frac{1}{\gm^k}\rt)^{\eps}
\\ &= C_{\gm,\eps}
\sg(R)^{1+\eps}.
\end{align*}
\end{rem}

\subsection{
The P\'{e}rez representation
}\label{ssec3.2}
For a~number $c>0$ and a~product cube $R\in\cR(\R^{N})$, we will use $cR$ to denote the product cube with the same center as $R$ but with $c$ times the side-lengths of $R$. 

We define a~C.~P\'{e}rez type representation of fractional integrals (see \cite{Pe}) by
\[
\ol{R}_{\al}^{\mu}f(x)
:=
\sum_{R\in\cdr(\R^{N})}
\mu(R)^{\frac{\al}{N}-1}\1_{R}(x)
\int_{3R}f\,{\rm d}\mu,
\quad x\in\R^{N}.
\]

We now verify the point-wise equivalence 
\begin{equation}\label{3.1}
R_{\al}^{\mu}f(x)
\approx
\ol{R}_{\al}^{\mu}f(x).
\end{equation}

We first observe that, for $u,v\in\R^d$ with $u \neq v$, the minimal dyadic cube $Q\in\cdq(\R^d)$ such that $Q\ni u$ and $3Q\ni v$ satisfies
\[
\frac{\ell(Q)}{2}<|u-v|<2\sqrt{d}\ell(Q).
\]
We will refer to such a~dyadic cube as $Q(u,v)$.

This observation, together with the fact that $\mu$ is a~doubling weight, enable us 
to deduce that
\[
\mu(R(x,y)) \approx \mu(R_0(x,y)),
\]
where, 
$x=(x_1,x_2,\ldots,x_n)\in\R^{N}$,
$y=(y_1,y_2,\ldots,y_n)\in\R^{N}$,
$x_i \neq y_i$, and 
\[
R_0(x,y)
:=
\prod_{i=1}^nQ(x_i,y_i).
\]
The fact that $\mu$ is a~reverse doubling weight and a~calculus of geometric series also enable us to deduce that
\[
\sum_{R\in\cdr(\R^{N})}
\mu(R)^{\frac{\al}{N}-1}
\1_{R}(x)\1_{3R}(y)
\approx
\mu(R_0(x,y))^{\frac{\al}{N}-1}
\approx
\mu(R(x,y))^{\frac{\al}{N}-1}.
\]
This equation and Fubini's theorem yield \eqref{3.1}.

\subsection{
The dyadic grid argument
}\label{ssec3.3}
For $\tau\in\{0,\pm\frac13\}^d$, the dyadic grid $\cdq^{\tau}(\R^d)$ is defined by 
\[
\cdq^{\tau}(\R^d)
:=
\{2^{-k}(m+\tau+[0,1)^d):\,
k\in\Z,m\in\Z^d\}.
\]

\begin{clm}\label{clm3.4}
We claim that, for any dyadic cube $Q\in\cdq(\R^d)$, there exist $\tau\in\{0,\pm\frac13\}^d$ and $\tau$-shifted dyadic cube $P\in\cdq^{\tau}(\R^d)$ such that 
$3Q\subset P$ and $\ell(P)=8\ell(Q)$.
\end{clm}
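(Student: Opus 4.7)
My plan is to rescale and reduce the statement to a concrete one-dimensional lattice problem. Since any $Q \in \cdq(\R^d)$ has the form $2^{-k}(m + [0,1)^d)$ with $k \in \Z$ and $m \in \Z^d$, applying the dilation $x \mapsto 2^k x$ (which maps each shifted dyadic grid $\cdq^{\tau}(\R^d)$ to itself) I may assume $\ell(Q) = 1$, so that $Q = m + [0,1)^d$ and $3Q = m + [-1, 2)^d$. Any $P \in \cdq^{\tau}(\R^d)$ with $\ell(P) = 8$ is of the form $P = 8(n + \tau) + [0, 8)^d$ for some $n \in \Z^d$; the containment $3Q \subset P$ then reduces to the coordinate-wise system
\[
8n_i + 8\tau_i \in [m_i - 6,\, m_i - 1],\qquad i = 1, 2, \ldots, d.
\]

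Because the coordinates decouple and $\tau \in \{0, \pm\frac13\}^d$ allows an independent choice $\tau_i \in \{0, \pm\frac13\}$ in each slot, it suffices to prove the one-dimensional claim: for every $m \in \Z$ there exist $n \in \Z$ and $\tau \in \{0, \pm\frac13\}$ with $8n + 8\tau \in [m - 6, m - 1]$. The key observation is the identity
\[
\bigl\{\,8n + 8\tau : n \in \Z,\; \tau \in \{0, \pm\tfrac13\}\,\bigr\} = \tfrac{8}{3}\Z,
\]
obtained by grouping the three cosets $8\Z$, $8\Z + \frac{8}{3}$ and $8\Z - \frac{8}{3}$, which equal $\frac{8}{3}\cdot 3\Z$, $\frac{8}{3}(3\Z + 1)$ and $\frac{8}{3}(3\Z - 1)$, respectively. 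Since consecutive elements of $\frac{8}{3}\Z$ are at distance $\frac{8}{3} < 5$, every closed interval of length $5$ in $\R$ meets $\frac{8}{3}\Z$; in particular the interval $[m - 6, m - 1]$ does, yielding the desired $n$ and $\tau$.

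I do not anticipate any serious obstacle: the argument is essentially a counting exercise on $\R$. The only bookkeeping to watch is the half-open convention for dyadic cubes (to justify the precise interval $[m - 6, m - 1]$) and the stability of the shifted dyadic grids under dilations by powers of $2$ (so that the reduction to $\ell(Q) = 1$ is legitimate). The enlargement factor $8$ is essentially forced by the spacing estimate $\frac{8}{3} < 5$: with a smaller dilation, or with fewer than three shifted grids per coordinate, the margin would be insufficient to cover $3Q$ by a single shifted dyadic cube.
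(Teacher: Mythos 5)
Your proof is correct and follows essentially the same route as the paper's: reduce coordinate-wise to $d=1$, rescale to $\ell(Q)=1$, and use the fact that the three shifts $\tau\in\{0,\pm\frac13\}$ yield length-$8$ intervals whose admissible left endpoints are spaced $\frac83$ apart, which is enough to hit the length-$5$ window $[m-6,m-1]$. The paper phrases this as a short case analysis on which standard length-$8$ dyadic intervals meet $3Q$, whereas you phrase it via the lattice identity $\{8n+8\tau\}=\frac83\Z$; the underlying mechanism is identical.
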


\begin{proof}
We need only verify the one-dimensional case $d=1$. 
(The claim for $d>1$ holds after $d$ steps.)
We may assume further $k=0$.
Let $Q=[m,m+1)$, $m\in\Z$. 
Then $3Q=[m-1,m+2)$.
We cover $3Q$ by disjoint dyadic intervals of $\cD(\R)$ 
with the same length $8$.
If $3Q$ is covered by such an~interval $P$, 
then we choose $\tau=0$ and have 
$P\in\cD^{\tau}(\R)$. 
We assume that $3Q$ is covered by such two intervals as 
$P_1\ni(m-1)$ 
and 
$P_2\ni(m+2)$.
If $|Q\cap P_1|\ge 1.5$, then
we choose $\tau=\frac13$ and let 
$P=\frac83+P_1$.
If $|Q\cap P_2|>1.5$, then
we choose $\tau=-\frac13$ and let 
$P=-\frac83+P_2$. 
This proves the claim.
\end{proof}

\subsection{
Proof of Theorem \ref{thm1.1}
}\label{ssec3.4}
Using Claim \ref{clm3.4}, 
we see that for any product dyadic cube $R\in\cdr(\R^{N})$ there exist 
$\tau\in\{0,\pm\frac13\}^{N}$ 
and 
$R'\in\cdr^{\tau}(\R^{N})$, 
which is defined by 
\[
\cdr^{\tau}(\R^{N})
:=
\prod_{i=1}^n
\cdq^{P_i(\tau)}(\R^{N_i}),
\]
such that $3R\subset R'$ and 
$\ell(P_i(R'))=8\ell(P_i(R))$,
$i=1,2,\ldots,n$. 
Moreover, 
we notice that this correspondence
$R \mapsto R'$
becomes \emph{almost bijective}.

By letting 
\[
T_{\al}^{\mu,\tau}f(x)
:=
\sum_{R\in\cdr^{\tau}(\R^{N})}
\mu(R)^{\frac{\al}{N}-1}\1_{R}(x)
\int_{R}f\,{\rm d}\mu,
\quad x\in\R^{N},
\]
since $\mu$ is doubling,
\begin{align*}
\ol{R}_{\al}^{\mu}f(x)
&=
\sum_{R\in\cdr(\R^{N})}
\mu(R)^{\frac{\al}{N}-1}\1_{R}(x)
\int_{3R}f\,{\rm d}\mu
\\ &\lesssim
\sum_{\tau\in\{0,\pm\frac13\}^{N}}
T_{\al}^{\mu,\tau}f(x).
\end{align*}
Applying Corollary \ref{cor2.3} to the right-hand side of this inequality, we obtain Theorem \ref{thm1.1}.

\section*{Appendix}
As an~appendix, 
we state the Carleson-type embedding theorem for product dyadi
c cubes. 
The importance of this theorem is that 
the Fefferman--Phong-type condition 
simply links 
the Carleson-type embedding theorem 
to 
the $M$-linear embedding theorem.

\begin{lem}
[{\rm\cite[Lemma 2.2]{Ta4}}]
Given a~weight $\sg$ in the product space $\R^{N}$ 
and $1<p<q<\8$, 
the following statements are equivalent\text{:}

\begin{itemize}
\item[{\rm(a)}] 
The Carleson-type embedding inequality for product dyadic cubes
\[
\sum_{R\in\cdr(\R^{N})}
\sg(R)^{\frac{q}{p}}
\lt(\frac{1}{\sg(R)}\int_{R}f\,{\rm d}\sg\rt)^q
\le c_1
\lt(\int_{\R^{N}}f^p\,{\rm d}\sg\rt)^{\frac{q}{p}}
\]
holds for all nonnegative functions 
$f\in L^p(\sg)$;
\item[{\rm(b)}] 
The testing condition 
\[
\sum_{Q\in\cD(P_j(R))}
\sg(\lgl R;\,Q,j\rgl)^{\frac{q}{p}}
\le c_2
\sg(R)^{\frac{q}{p}}
\]
holds for all product dyadic cubes $R\in\cdr(\R^{N})$ 
and all integers $j=1,2,\ldots,n$.
\end{itemize}

\noindent
Moreover, 
the least possible constants $c_1$ and $c_2$ 
enjoy $c_1\le C c_2^n$ 
and $c_2\le c_1$.
\end{lem}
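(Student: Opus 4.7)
The direction $(a)\Rightarrow(b)$ is an easy testing argument: for each product dyadic cube $R\in\cdr(\R^N)$ and each $j\in\{1,\ldots,n\}$, substitute $f=\1_R$ into (a). Since $\lgl R;Q,j\rgl\subset R$ for every $Q\in\cD(P_j(R))$, each corresponding term on the left-hand side reduces to $\sg(\lgl R;Q,j\rgl)^{q/p}$, while the right-hand side equals $c_1\sg(R)^{q/p}$. This yields $c_2\le c_1$.

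For the substantive direction $(b)\Rightarrow(a)$ my plan is to induct on the number $n$ of parameters, aiming at $c_1\le C^n c_2^n$. The base case $n=1$ is the classical one-parameter weighted Carleson embedding theorem, proved by choosing principal cubes via the standard stopping-time construction in which the $\sg$-averages of $f$ double; these form a $\sg$-sparse family equipped with pairwise disjoint sets $E_Q\subset Q$ of comparable $\sg$-measure. Combining the testing condition (b) on each principal tower with the $\ell^{q/p}\subset\ell^1$ inclusion (which is precisely where $q>p$ is used) and the $L^p(\sg)$-boundedness of the dyadic maximal operator yields $c_1\le Cc_2$.

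For the inductive step $n-1\to n$, I split off the first coordinate $x_1\in\R^{N_1}$. For each $Q_1\in\cdq(\R^{N_1})$ I introduce the marginal weight $\sg_1^{Q_1}(\bar x):=\int_{Q_1}\sg(x_1,\bar x)\,dx_1$ on $\R^{N-N_1}$ together with the conditional $\sg$-average
\[
G^{Q_1}(\bar x):=\frac{\int_{Q_1}f(x_1,\bar x)\,\sg(x_1,\bar x)\,dx_1}{\sg_1^{Q_1}(\bar x)}.
\]
Then $\sg(Q_1\times R'')=\sg_1^{Q_1}(R'')$ and the $\sg$-average of $f$ on $Q_1\times R''$ equals the $\sg_1^{Q_1}$-average of $G^{Q_1}$ on $R''$. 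Hypothesis (b) with $j=2,\ldots,n$ transfers verbatim to the $(n-1)$-parameter testing condition for $\sg_1^{Q_1}$, so applying the inductive hypothesis to each $Q_1$ and summing reduces the left-hand side of (a) to $(Cc_2)^{n-1}\sum_{Q_1}B(Q_1)^{q/p}$, where $B(Q_1):=\|G^{Q_1}\|_{L^p(\sg_1^{Q_1})}^p$.

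The last ingredient is Minkowski's integral inequality in the $\ell^{q/p}$ sense (valid since $q/p>1$), which I use to interchange the outer sum over $Q_1$ with the integration over $\bar x\in\R^{N-N_1}$ hidden inside $B(Q_1)$; this reduces the problem to a pointwise-in-$\bar x$ one-parameter Carleson embedding for the slice weight $\sg(\cdot,\bar x)$. A Lebesgue differentiation argument applied to (b) with $j=1$ (letting the $\bar x$-cube shrink to $\{\bar x\}$) furnishes the slice one-parameter testing condition for almost every $\bar x$, so the base case produces one more factor $Cc_2$, closing the induction with $c_1\le C^n c_2^n$. The main obstacle I anticipate is organizing this final step cleanly: the Minkowski interchange must line up exactly with the slice-wise one-parameter Carleson, and the slice testing constant must be controlled uniformly by $c_2$, both of which require careful handling of the differentiation argument.
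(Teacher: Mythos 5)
The paper does not actually prove this lemma: it is stated in the appendix purely as a quotation of \cite[Lemma 2.2]{Ta4}, so there is no in-paper argument to compare yours against. Judged on its own merits, your plan is sound and, as far as I can tell, matches the spirit of the cited proof (the claimed bound $c_1\le Cc_2^n$ is exactly what an $n$-step induction on the number of parameters, each step costing one factor of $c_2$, produces). The easy direction is fine: testing $f=\1_R$ and restricting the sum to the subfamily $\{\lgl R;\,Q,j\rgl:Q\in\cD(P_j(R))\}$ gives $c_2\le c_1$. In the hard direction, the three ingredients all work: (i) the one-parameter base case via principal cubes, the testing condition on each stopping tower, and the $L^p(\sg)$-bound for the weighted dyadic maximal operator; (ii) the reduction to $n-1$ parameters through the marginal weight $\sg_1^{Q_1}$ and the conditional average $G^{Q_1}$, for which the testing condition with $j\ge2$ does transfer verbatim with the same constant $c_2$, uniformly in $Q_1$; and (iii) Minkowski's inequality for the $\ell^{q/p}$-norm ($q/p>1$) followed by the slice-wise one-parameter embedding, where the slice testing constant is obtained from condition (b) with $j=1$ by dyadic differentiation and Fatou's lemma applied to the sum over the countably many cubes $Q\in\cD(Q_1)$. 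Two small points to fix when you write this out: the inclusion you invoke in the base case should be $\|\cdot\|_{\ell^{q/p}}\le\|\cdot\|_{\ell^1}$ (you wrote the containment backwards), and you should record the conventions for terms with $\sg(R)=0$ and check measurability/integrability of $\bar x\mapsto\sg_1^{Q}(\bar x)$ (Fubini) before differentiating. Neither is a genuine gap.
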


\end{document}